\def\N{\mathbb{N}}
\def\Z{\mathbb{Z}}
\def\Q{\mathbb{Q}}
\def\C{\mathbb{C}}
\def\ccc{{\mathfrak{C}}}
\def\gg{{\widetilde{G}}}
\numberwithin{equation}{section}
\newtheorem{theorem}{Theorem}[section]
\newtheorem{lemma}[theorem]{Lemma}
\theoremstyle{definition}
\newtheorem{example}[theorem]{Example}
\theoremstyle{remark}
\DeclareSymbolFont{cyss}{OT2}{wncyss}{m}{n}
\DeclareMathSymbol{\sh}{\mathbin}{cyss}{`x}
\numberwithin{equation}{section}
\def\N{\mathbb{N}}
\def\Z{\mathbb{Z}}
\def\Q{\mathbb{Q}}
\def\C{\mathbb{C}}
\def\ccc{{\mathfrak{C}}}
\def\gg{{\widetilde{G}}}
\numberwithin{equation}{section}
 \numberwithin{equation}{section}
\begin{document}

\title{On multiple series of Eisenstein type}




\author{Henrik Bachmann}


\address{H. Bachmann, 
              Department of Mathematics, Universit\"at Hamburg, Bundesstrasse 55, 20146 Hamburg, Germany}
\email{henrik.bachmann@math.uni-hamburg.de}

\author{Hirofumi Tsumura
}


\address{
H. Tsumura, 
              Department of Mathematics and Information Sciences, 
Tokyo Metropolitan University, 
1-1, Minami-Ohsawa, Hachioji, 
Tokyo 192-0397, Japan}
              \email{tsumura@tmu.ac.jp}           


\maketitle

\begin{abstract}
  The aim of this paper is to study certain multiple series which can be regarded as multiple analogues of Eisenstein series. As a prior research, the second-named author considered double analogues of Eisenstein series and expressed them as polynomials in terms of ordinary Eisenstein series. This fact was derived from the analytic observation of infinite series involving hyperbolic functions which were based on the study of Cauchy, and also Ramanujan. In this paper, we prove an explicit relation formula among these series. This gives an alternative proof of this fact by using the technique of partial fraction decompositions of multiple series which was introduced by Gangl, Kaneko and Zagier. By the same method, we further show a certain multiple analogue of this fact and give some examples of explicit formulas. Finally we give several remarks about the relation between our present result and the previous work for infinite series involving hyperbolic functions.  


\end{abstract}

\section{Introduction}\label{sec-1}

Let $\mathbb{N}$, $\mathbb{Z}$, $\mathbb{Q}$, $\mathbb{R}$, $\mathbb{C}$ be the sets of natural numbers, rational integers,  rational numbers,  real numbers and complex numbers, respectively. Throughout this paper, the empty sum and the empty product are interpreted as $0$ and $1$, respectively.

Let $G_{2j}(\tau)$ $(j\in \mathbb{N}_{\geq 2})$ be the ordinary 
Eisenstein series defined by
\begin{equation}
G_{2j}(\tau)=\sum_{m\in \mathbb{Z}}\sum_{n\in \mathbb{Z}\atop (m,n)\not=(0,0)}\frac{1}{(m+n\tau)^{2j}} \label{Eisenstein}
\end{equation}
for $\tau \in \mathbb{C}$ with ${\rm Im} (\tau)>0$ (see, for example, Koblitz\,\cite{Ko} and Serre\,\cite{Se}). Further $G_{2}(\tau)$ can be defined by 
\begin{equation}
G_{2}(\tau)=\sum_{m\in \mathbb{Z}}\sum_{n\in \mathbb{Z}\atop (m,n)\not=(0,0)}\frac{1}{(m+n\tau)^{2}}, \label{Eisenstein-2}
\end{equation}
which is conditionally convergent with respect to the order of summation as above. It should be noted that the usual definition of $G_2(\tau)$ is 
\begin{equation*}
\sum_{m\in \mathbb{Z}}\sum_{n\in \mathbb{Z}\atop (m,n)\not=(0,0)}\frac{1}{(m\tau+n)^{2}}, 
\end{equation*}
which is equal to $G_2(\tau)+2\pi i/\tau$ using \eqref{Eisenstein-2} (see \cite[Section 7.4]{Se}). We here adopt the definition \eqref{Eisenstein-2} which is suitable from the viewpoint of the connection with certain Eisenstein type series involving hyperbolic functions studied in \cite{TsB,TsAus} (see below). 

In \cite{gkz}, 
Gangl, Kaneko and Zagier defined the double Eisenstein series 
\begin{equation}
G_{r,s}(\tau)=\sum_{{\bf n_1},{\bf n_2}\in \mathbb{Z}+\mathbb{Z}\tau \atop {\bf n_1} \succ {\bf n_2} \succ 0}
\frac{1}{{\bf n_1}^r{\bf n_2}^s}\quad (r\in \mathbb{N}_{\geq 3},\ s\in \mathbb{N}_{\geq 2}),   \label{GKZ-1}
\end{equation}
where ${\bf n} \succ 0$ means ${\bf n}=m+n\tau$ with $n>0$ or $n=0$, $m>0$, and ${\bf m} \succ {\bf n}$ means ${\bf m}-{\bf n} \succ 0$. 
They gave a new approach to investigation of multiple zeta values by 
considering the Fourier expansion of \eqref{GKZ-1}, which was shown by the partial fraction decomposition (see \cite[Theorem 6]{gkz}). 

In \cite{TsPJM}, the second-named author 
considered another type of the double Eisenstein series defined by
\begin{align}
\gg_{2p,2q}(\tau)& =\sum_{m\in \mathbb{Z}} \sum_{n_1 \in \mathbb{Z}\atop (m,n_1)\not=(0,0)}\sum_{n_2\in \mathbb{Z}\atop (m,n_2)\not=(0,0)}\frac{1}{(m+n_1\tau)^{2p}(m+n_2\tau)^{2q}}\quad (p,q \in \mathbb{N}) \label{e-1-1}
\end{align}
and its level-$N$ version based on the previous results \cite{TsB,TsAus}. We can easily see that, except for the case $(p,q)=(1,1)$, the right-hand side of \eqref{e-1-1} converges absolutely. Hence the order of summation can be changed, namely $\gg_{2p,2q}(\tau)=\gg_{2q,2p}(\tau)$. In the case $(p,q)=(1,1)$, the right-hand side of \eqref{e-1-1} is conditionally convergent with respect to the order of summation as above. It was shown that
\begin{align}
& \tau^{2(p+q)}\widetilde{G}_{2p,2q}(\tau)\in \mathbb{Q}\left[ \tau,\, \pi,\,G_{2}(\tau),\,G_4(\tau),\,G_6(\tau)\right] \label{e-1-2}
\end{align}
for $p,q\in \mathbb{N}$ (see \cite[Theorem 4.2]{TsPJM}). For example, we have
\begin{align}
 \gg_{2,2}(\tau)&=G_4(\tau)+\frac{2\pi^2}{3\tau^2}G_2(\tau)-\frac{2\pi^4}{15\tau^4}, \label{G22-t}\\
 \gg_{2,4}(\tau)&=G_6(\tau)+\frac{\pi^2}{3\tau^2}G_4(\tau)+\frac{4\pi^4}{45\tau^4}G_2(\tau)-\frac{2\pi^6}{63\tau^6}. \label{G24-t}
\end{align}

Note that 
Pasles and Pribitkin \cite{Pas} gave a generalization of the Lipschitz summation formula which seems to be related to $\widetilde{G}_{2p,2q}(\tau)$, 
while it is not yet clear. 

The motivation to study $\gg_{2p,2q}(\tau)$ occurs from interest to the double series of Eisenstein type defined by
\begin{equation}\label{coth-Eisen}
\ccc_{j}^{k}(\tau):=\sum_{m\in \mathbb{Z}\atop m\ne 0} \sum_{n \in \mathbb{Z}}\frac{\coth^{k} ((m+n\tau)\pi i/\tau)}{(m+n\tau)^{j}}\quad (j\in \mathbb{N}_{\geq 2}).
\end{equation}
As is well-known, Cauchy proved, and Ramanujan rediscovered the interesting formulas
\begin{align}\label{Cauchy}
&\sum_{m\in \mathbb{Z}\atop m\ne 0} \frac{\coth (m\pi)}{m^{4p+3}}=(2\pi)^{4p+3}\sum_{\nu=0}^{2p+2}(-1)^{\nu+1} \frac{B_{2\nu}}{(2\nu)!}\frac{B_{4p+4-2\nu}}{(4p+4-2\nu)!} \quad (p\in \mathbb{Z}_{\geq 0}),
\end{align}
where $\{B_n\}$ are the Bernoulli numbers (see Cauchy \cite[pp.\,320,\,361]{Ca}, also Ramanujan's notebooks \cite[p.\,293,\,(25.3)]{Be2}). As Eisenstein-type analogues of \eqref{Cauchy}, the second-named author proved
\begin{equation}\label{coth-Eisen-2}
\ccc_{j}^{k}(i)=\sum_{m\in \mathbb{Z}\atop m\ne 0} \sum_{n \in \mathbb{Z}}\frac{\coth^{k} ((m+ni)\pi)}{(m+ni)^{j}} \in \mathbb{Q}\left[\frac{1}{\pi},\pi,\varpi\right]
\end{equation}
for $j,k \in \mathbb{N}$ with $j\geq 2$ and $j \equiv k\ (\textrm{mod}\ 2)$, where $i=\sqrt{-1}$ and $\varpi$ is the lemniscate constant defined by \eqref{lemniscate} (see \cite[Cororally 2.5]{TsAus}). Furthermore its generalization for $\tau$ was given (see \cite[Theorem 2.3]{TsPJM}). It is noted that $\gg_{2p,2q}(\tau)$ is closely connected with \eqref{coth-Eisen}. Actually the result for \eqref{coth-Eisen} stated above yields \eqref{e-1-2}.

In this paper, we first give an alternative and more simple proof of \eqref{e-1-2} using the partial fraction decomposition similar to that in \cite{gkz}. More explicitly, we give the following expression formulas of $\gg_{2p,2q}(\tau)$ in terms of ordinary Eisenstein series, where $\zeta(s)$ is the Riemann zeta-function.

\begin{theorem}\label{thm1}
For $p,q \in \N$,
\begin{align*}
\widetilde{G}_{2p,2q}(\tau)& = G_{2(p+q)}(\tau) \\
& \quad + \sum_{l_1,l_2\in \mathbb{N}\atop l_1 + l_2 = p+q} \left( \frac{2 \zeta(2l_2) \binom{2l_2-1}{2q-1} }{\tau^{2l_2}} G_{2l_1}(\tau) + \frac{2 \zeta(2l_1) \binom{2l_1-1}{2p-1} }{\tau^{2l_1}}  G_{2l_2}(\tau) \right) \\
& \quad  + \frac{4}{\tau^{2(p+q)}}\bigg[ \zeta(2p) \zeta(2q) - \frac{1}{2} \zeta(2(p+q))\\
& \qquad \qquad \qquad - \sum_{l_1,l_2\in \mathbb{N} \atop l_1 + l_2 = p+q}\zeta(2l_1)\zeta(2l_2) \left( \binom{2l_2-1}{2q-1} + \binom{2l_1-1}{2p-1} \right) \bigg].
\end{align*}
In particular, $\tau^{2(p+q)} \widetilde{G}_{2p,2q}(\tau) \in \Q \left[ \tau^2, \pi^2, G_2(\tau), G_4(\tau), G_6(\tau) \right]$.
\end{theorem}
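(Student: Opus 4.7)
The plan is to follow the partial-fraction strategy of Gangl--Kaneko--Zagier \cite{gkz} applied to $\widetilde{G}_{2p,2q}(\tau)$. First, I would observe that for each fixed $m$ the inner double sum factorises as $A_{2p}(m,\tau)A_{2q}(m,\tau)$, where $A_j(m,\tau) := \sum_{n:(m,n)\ne(0,0)}(m+n\tau)^{-j}$. Separating the $m=0$ contribution yields $A_{2p}(0,\tau)A_{2q}(0,\tau) = 4\zeta(2p)\zeta(2q)/\tau^{2(p+q)}$. Within $m\ne 0$ I would split the $(n_1,n_2)$-sum by $n_1=n_2$ versus $n_1\ne n_2$: the diagonal assembles into $G_{2(p+q)}(\tau) - 2\zeta(2(p+q))/\tau^{2(p+q)}$, leaving the off-diagonal remainder
\[ U := \sum_{m\ne 0}\sum_{n_1\ne n_2}\frac{1}{(m+n_1\tau)^{2p}(m+n_2\tau)^{2q}}. \]

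To compute $U$ I would apply the partial fraction decomposition in $m$ with poles at $-n_1\tau,-n_2\tau$, namely
\begin{equation*}
\frac{1}{(m+n_1\tau)^{2p}(m+n_2\tau)^{2q}} = \sum_{j=1}^{2p}\frac{\binom{2p+2q-j-1}{2q-1}}{((n_1-n_2)\tau)^{2p+2q-j}(m+n_1\tau)^j} + \sum_{k=1}^{2q}\frac{(-1)^k\binom{2p+2q-k-1}{2p-1}}{((n_1-n_2)\tau)^{2p+2q-k}(m+n_2\tau)^k}.
\end{equation*}
After substituting and swapping the order of summation to perform the $n_2$-sum (respectively the $n_1$-sum in the second half) first, the substitution $l=n_1-n_2$ reduces the innermost sum to
\begin{equation*}
\sum_{l\in\mathbb{Z}\setminus\{0\}}\frac{1}{l^E} = \begin{cases} 2\zeta(E), & E\ge 2\text{ even,} \\ 0, & E\ge 3\text{ odd,} \end{cases}
\end{equation*}
which kills all contributions with $j$ (respectively $k$) odd. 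For even $j=2l_1$, setting $2l_2:=2p+2q-j$ so that $l_1+l_2=p+q$, and invoking the identity $\sum_{m\ne 0,\,n\in\mathbb{Z}}(m+n\tau)^{-2l_1}=G_{2l_1}(\tau)-2\zeta(2l_1)/\tau^{2l_1}$, each surviving piece contributes exactly $\frac{2\zeta(2l_2)\binom{2l_2-1}{2q-1}}{\tau^{2l_2}}G_{2l_1}(\tau) - \frac{4\zeta(2l_1)\zeta(2l_2)\binom{2l_2-1}{2q-1}}{\tau^{2(p+q)}}$; the second half of the partial fraction yields the analogous symmetric contribution with $p,q$ swapped. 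Since $\binom{2l_2-1}{2q-1}$ automatically vanishes when $l_2<q$, the summation may be extended to the full range $l_1+l_2=p+q$ with $l_i\ge 1$ as in the theorem, and collecting every $G$- and $\zeta$-term reproduces the formula term by term.

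The ``in particular'' statement is then immediate from $\zeta(2l)\in\mathbb{Q}\pi^{2l}$ together with the classical fact that $G_{2l}(\tau)\in\mathbb{Q}[G_4(\tau),G_6(\tau)]$ for $l\ge 2$, with $G_2(\tau)$ entering only through the $l_1=1$ or $l_2=1$ summands. The principal obstacle lies in justifying the successive interchanges of summation: for $p+q\ge 3$ the triple series is absolutely convergent and all reorderings are harmless, but the borderline case $(p,q)=(1,1)$ is only conditionally convergent in the order of \eqref{e-1-1}, and the odd-$j$, odd-$k$ partial-fraction pieces rely on symmetric summation in $l$ to vanish. I would address this either by carefully preserving the Eisenstein summation convention throughout every reordering, or by first proving the identity for $p+q\ge 3$ and then deducing $(1,1)$ by a limiting argument, consistent with the direct verification against \eqref{G22-t}.
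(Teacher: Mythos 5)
Your proposal is correct and follows essentially the same route as the paper: separate the $m=0$ term (giving $4\zeta(2p)\zeta(2q)/\tau^{2(p+q)}$), split the remaining sum into the diagonal $n_1=n_2$ and off-diagonal parts, apply the Gangl--Kaneko--Zagier partial fraction decomposition in $m$ with poles at $-n_1\tau,-n_2\tau$, and use the vanishing of $\sum_{l\neq 0}l^{-E}$ for odd $E$ to keep only the even-index terms before reassembling via $\sum_{m\neq 0,\,n}(m+n\tau)^{-2l}=G_{2l}(\tau)-2\zeta(2l)/\tau^{2l}$. Your explicit attention to the conditionally convergent case $(p,q)=(1,1)$ is a point the paper passes over silently, but it does not change the argument.
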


Furthermore we consider the multiple series analogue of \eqref{e-1-1} defined by
\begin{align}
\gg_{2p_1,\ldots,2p_r}(\tau)& =\sum_{m\in \mathbb{Z}} \sum_{n_1 \in \mathbb{Z}\atop (m,n_1)\not=(0,0)}\cdots \sum_{n_r\in \mathbb{Z}\atop (m,n_r)\not=(0,0)}\prod_{j=1}^{r}\frac{1}{(m+n_j\tau)^{2p_j}}\label{e-1-4}
\end{align}
for $r\in \mathbb{N}_{\geq 2}$ and $p_1,\ldots,p_r\in \mathbb{N}$. Similar to the above notice, except for the case $(p_1,\ldots,p_r)=(1,\ldots,1)$, the right-hand side of \eqref{e-1-4} converges absolutely. Hence the order of summation can be changed. In the case $(p_1,\ldots,p_r)=(1,\ldots,1)$, the right-hand side of \eqref{e-1-4} is conditionally convergent with respect to the order of summation as above. 

With the above notation, we prove the general multiple version of \eqref{e-1-2} as follows.

\begin{theorem}\label{T-1}\ \ For $r\in \mathbb{N}_{\geq 2}$ and $p_1,\ldots,p_r\in \mathbb{N}$, 
\begin{align}
& \tau^{2(p_1+\cdots+p_r)}\widetilde{G}_{2p_1,\ldots,2p_r}(\tau)\in \mathbb{Q}\left[ \tau^2,\, \pi^2,\,G_{2}(\tau),\,G_4(\tau),\,G_6(\tau)\right]. \label{e-1-5}
\end{align}
\end{theorem}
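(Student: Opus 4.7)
My plan is to prove Theorem~\ref{T-1} by induction on $r$, with base case $r=2$ supplied by Theorem~\ref{thm1}. For the inductive step I split the sum \eqref{e-1-4} according to whether $n_{r-1}=n_r$ or $n_{r-1}\ne n_r$. The diagonal piece collapses to the $(r-1)$-fold series $\widetilde{G}_{2p_1,\ldots,2p_{r-2},\,2(p_{r-1}+p_r)}(\tau)$, to which the inductive hypothesis applies directly.

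For the off-diagonal piece I apply the classical partial fraction decomposition in the variable $m$,
\[
\frac{1}{(m+x)^{a}(m+y)^{b}}=\sum_{k=1}^{a}\frac{(-1)^{a-k}\binom{a+b-k-1}{a-k}}{(m+x)^{k}\,(y-x)^{a+b-k}}+\sum_{k=1}^{b}\frac{(-1)^{b-k}\binom{a+b-k-1}{b-k}}{(m+y)^{k}\,(x-y)^{a+b-k}},
\]
with $x=n_{r-1}\tau$, $y=n_r\tau$, $a=2p_{r-1}$, $b=2p_r$. Substituting and carrying out the summation over $n_r$ first (and over $n_{r-1}$ first for the symmetric piece), the $n_r$-dependence factors through $\sum_{l\in\mathbb{Z}\setminus\{0\}}1/(l\tau)^{a+b-k}$, which equals $2\zeta(a+b-k)/\tau^{a+b-k}$ when $a+b-k$ is even and vanishes by odd symmetry when $a+b-k$ is odd. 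Since $a+b$ is even, only even $k\in\{2,4,\ldots,2p_{r-1}\}$ survive, and what remains is a $\mathbb{Q}[\pi^{2},\tau^{-2}]$-linear combination of the $(r-1)$-fold Eisenstein-type series $\widetilde{G}_{2p_1,\ldots,2p_{r-2},\,k}(\tau)$. By the inductive hypothesis each of these satisfies the desired polynomial membership, and since $\zeta(2l)\in\mathbb{Q}[\pi^{2}]$, multiplying through by $\tau^{2(p_1+\cdots+p_r)}$ yields the stated conclusion.

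The main obstacle will be bookkeeping the boundary contributions arising from the restriction $(m,n_j)\ne(0,0)$: when $m=0$ the $n_r$-summation must also exclude $n_r=0$, producing additional correction terms of the form (product of even-argument $\zeta$-values) times (shorter $\widetilde{G}$-series), all of which are again absorbed by the induction. A secondary point of care is the conditionally convergent case $(p_1,\ldots,p_r)=(1,\ldots,1)$, where the partial fraction manipulation must respect the prescribed order of summation; however every inner sum produced by the decomposition has exponent $k\ge 2$ and is therefore absolutely convergent, so only the final outer $m$-sum carries conditional convergence and this coincides with the order already specified in \eqref{e-1-4}.
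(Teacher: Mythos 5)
Your proposal is correct and follows essentially the same route as the paper: induction on $r$ with Theorem \ref{thm1} as the base case, splitting the last two indices into the diagonal part $n_{r-1}=n_r$ (which collapses to a shorter series) and the off-diagonal part, applying the partial fraction decomposition in $m$, and factoring the $(n_{r-1}-n_r)$-sum into $2\zeta(k)/\tau^{k}$ for even $k$ and $0$ for odd $k$. The only cosmetic difference is that the paper isolates the $(m,n_j)\neq(0,0)$ bookkeeping once and for all by passing to the auxiliary series $\widetilde{G}^*$ with $m\neq 0$ (whose difference from $\widetilde{G}$ is the single product $\prod_j 2\zeta(2p_j)/\tau^{2p_j}$), whereas you propose to track these boundary terms along the way.
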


For example,
$$ \gg_{2,2,2}(\tau)=G_6(\tau)+\frac{\pi^2}{\tau^2}G_4(\tau)+\frac{8\pi^4}{15\tau^4}G_2(\tau)-\frac{52\pi^6}{315\tau^6}.$$

\ 

We give the proof of Theorem \ref{thm1} in Section \ref{sec-2}, and that of Theorem \ref{T-1} in Section \ref{sec-3}. In Section \ref{sec-4}, we briefly remark a relation between our present result and the study of multiple series involving hyperbolic functions defined by
$$\sum_{m\in \mathbb{Z}\atop m\ne 0} \sum_{n_1 \in \mathbb{Z}}\cdots \sum_{n_r\in \mathbb{Z}}{\coth^{2k} ((m+n_r\tau)\pi i/\tau)}\prod_{j=1}^{r}\frac{1}{(m+n_j\tau)^{2p_j}},$$
which is a multiple version of \eqref{coth-Eisen}. In fact, we can show a certain multiple analogue of \eqref{coth-Eisen-2}.

Finally we note that this paper contains a lot of examples of formulas which were numerically checked by using Mathematica 9. 

\

\section{Proof of Theorem \ref{thm1}}\label{sec-2}

In this section, we give the proof of Theorem \ref{thm1}. 
First we prepare the following two series for $p,q\in \mathbb{N}$:
\begin{align*} 
& \widetilde{G}^*_{2p}(\tau) := \sum_{\substack{m \in \Z \\ m \neq 0}} \sum_{n_1 \in \Z} \frac{1}{(m+n_1 \tau)^{2p}} \,,\\
& \widetilde{G}^*_{2p,2q}(\tau) := \sum_{\substack{m \in \Z \\ m \neq 0}} \sum_{n_1\in \Z}\sum_{n_2 \in \Z} \frac{1}{(m+n_1 \tau)^{2p} (m+n_2\tau)^{2q}},  
\end{align*}
where the order of summation is fixed in the case of $p=q=1$. 
It is easy to check that 
\begin{equation} \label{G_Gstar}
\widetilde{G}_{2p}(\tau)  = \widetilde{G}^*_{2p}(\tau)  +\frac{2 \zeta(2p) }{\tau^{2p}} \,, \quad \widetilde{G}_{2p,2q}(\tau) = \widetilde{G}^*_{2p,2q}(\tau) + \frac{4 \zeta(2q) \zeta(2q)}{\tau^{2(p+q)}} \,.
\end{equation}

To calculate $\widetilde{G}^*$, we use an approach similar to that in the consideration of the Fourier expansion of \eqref{GKZ-1} introduced in \cite{gkz}. Actually we use the partial fraction expansion 
\begin{equation} \label{partfrac_len2}
\begin{split}
& \frac{1}{(x+c_1)^{s_1} (x+c_2)^{s_2}} \\
& \ = \sum_{k_1,k_2\in \mathbb{N} \atop k_1 + k_2 = s_1+s_2} \left( \frac{(-1)^{s_2} \binom{k_2-1}{s_2-1} }{ (c_1-c_2)^{k_2} (x+c_1)^{k_1} }   + \frac{(-1)^{s_1} \binom{k_1-1}{s_1-1} }{ (c_2-c_1)^{k_1}(x+c_2)^{k_2} }  \right) \,
\end{split}
\end{equation}
which is valid for $c_1,c_2 \in \C$ with $c_1 \neq c_2$ and $s_1,s_2 \in \N$ (see, for example, \cite[Section 2]{gkz}). Splitting up the summation of $\widetilde{G}^*_{2p,2q}$ into the parts where $n_1=n_2$ and $n_1 \neq n_2$ together with $c_j = \tau n_j,\, x=m$ and $s_1= 2p$, $s_2 = 2q$ in \eqref{partfrac_len2}, we obtain
\begin{align*}
\widetilde{G}^*_{2p,2q}(\tau) &=  \left\{\sum_{\substack{m \in \Z \\ m \neq 0}} \sum_{n_1=n_2 \in \Z} +  \sum_{\substack{m \in \Z \\ m \neq 0}} \sum_{\substack{n_1, n_2 \in \Z\\ n_1 \neq n_2}}\right\} \frac{1}{(m+n_1 \tau)^{2p} (m+n_2\tau)^{2q}} \\
&= \widetilde{G}^*_{2(p+q)}(\tau) + \sum_{\substack{m \in \Z \\ m \neq 0}} \sum_{\substack{n_1, n_2 \in \Z\\ n_1 \neq n_2}} \sum_{k_1,k_2\in \mathbb{N} \atop k_1 + k_2 = 2(p+q)} \left( \frac{1}{\tau^{k_2}} \frac{\binom{k_2-1}{2q-1} }{ (n_1-n_2)^{k_2} (m+n_1 \tau)^{k_1} } \right) \\
& \quad + \sum_{\substack{m \in \Z \\ m \neq 0}} \sum_{\substack{n_1, n_2 \in \Z\\ n_1 \neq n_2}} \sum_{k_1,k_2\in \mathbb{N} \atop k_1 + k_2 = 2(p+q)} \left( \frac{1}{\tau^{k_1}} \frac{ \binom{k_1-1}{2p-1} }{ (n_2-n_1)^{k_1} (m+n_2 \tau)^{k_2} } \right).
\end{align*}
It is easy to see that
\begin{equation} \label{sumtoprod}
 \frac{1}{\tau^{k_2}} \sum_{\substack{n_1, n_2 \in \Z\\ n_1 \neq n_2}}  \frac{\binom{k_2-1}{2q-1} }{ (n_1-n_2)^{k_2} (m+n_1 \tau)^{k_1}} = \frac{2 \zeta^\dagger (k_2) \binom{k_2-1}{2q-1} }{\tau^{k_2}}  \sum_{\substack{n_1 \in \Z}} \frac{1}{ (m+n_1 \tau)^{k_1}} \,, 
 \end{equation}
where we set
\begin{equation*}
\zeta^\dagger (k) = 
\begin{cases} 
\zeta(k) & \text{(if $k$ is even)}\\
0 &\text{(if $k$ is odd)}.
\end{cases}
\end{equation*}
Therefore we can write
\begin{align*}
 \widetilde{G}^*_{2p,2q}(\tau)  &= \widetilde{G}^*_{2(p+q)}(\tau) \\
& \quad + \sum_{k_1,k_2\in \mathbb{N}\text{:even} \atop k_1 + k_2 = 2(p+q)}\left( \frac{2 \zeta^\dagger (k_2) \binom{k_2-1}{2q-1} }{\tau^{k_2}}  G^*_{k_1}(\tau) + \frac{2 \zeta^\dagger (k_1) \binom{k_1-1}{2p-1} }{\tau^{k_1}}  G^*_{k_2}(\tau)\right) \,.
\end{align*}
Thus it follows from \eqref{G_Gstar} that the former assertion of the theorem holds. The latter assertion follows from the well-known facts $G_{2k} \in \Q[G_4,G_6]$ for $k\in \mathbb{N}_{\geq 2}$ and $\zeta(2l) \in \Q[\pi^{2}]$ for $l\in \mathbb{N}$ (see \cite{Ko,Se}). This completes the proof of Theorem \ref{thm1}.

\begin{example}
\begin{align}
 \gg_{2,6}(\tau)&=G_8(\tau) + \frac{\pi^2}{3\tau^2}G_6(\tau) + \frac{\pi^4}{15\tau^4}G_4(\tau) + \frac{4\pi^6}{315\tau^6}G_2(\tau) - \frac{4\pi^8}{675\tau^8}, \label{G26-t}\\
 \gg_{2,8}(\tau)&=G_{10}(\tau) + \frac{\pi^2}{3\tau^2}G_8(\tau) + \frac{\pi^4}{15\tau^4}G_6(\tau) + \frac{2\pi^6}{189\tau^6}G_4(\tau) \label{G28-t}\\
& \quad + \frac{8\pi^8}{4725\tau^8}G_2(\tau) - \frac{2\pi^{10}}{2079\tau^{10}},\notag
\end{align}
where we note that 
\begin{equation}
G_8(\tau)=\frac{3}{7}G_4(\tau)^2,\quad G_{10}(\tau)=\frac{5}{11}G_4(\tau)G_6(\tau) \label{Eisen-rel}
\end{equation}
(see \cite[Section 7]{Se}). In particular when $\tau=i=\sqrt{-1}$, we make use of the well-known Hurwitz formulas:
\begin{equation}
G_2(i)=-\pi,\ G_4(i)=\frac{\varpi^4}{15},\ G_6(i)=0,\ G_8(i)=\frac{\varpi^8}{525},\ldots \label{e-1-3}
\end{equation}
(see, for example, \cite[Section 8]{Le}), where $\varpi$ is the lemniscate constant defined by 
\begin{equation}\label{lemniscate}
\varpi=2\int_{0}^{1}\frac{dx}{\sqrt{1-x^4}}=\frac{\Gamma(1/4)^2}{2\sqrt{2\pi}}=2.6220575542921\cdots.
\end{equation}
By these results, we have from \eqref{G22-t}-\eqref{G28-t} that
\begin{align*}
& \widetilde{G}_{2,2}(i)=\frac{\varpi^4}{15}-\frac{2}{15}\pi^4+\frac{2}{3}\pi^3, \\
& \widetilde{G}_{2,4}(i)=-\frac{\varpi^4\pi^2}{45}+\frac{2}{63}\pi^6-\frac{4}{45}\pi^5, \\
& \widetilde{G}_{2,6}(i)=\frac{\varpi^8}{525}+\frac{\varpi^4\pi^4}{225}-\frac{4}{675}\pi^8+\frac{4}{315}\pi^7,\\ 
& \widetilde{G}_{2,8}(i)=- \frac{\varpi^8\pi^2}{1575}- \frac{2}{2835}\varpi^4\pi^6 +\frac{2}{2079}\pi^{10}- \frac{8}{4725}\pi^9, 
\end{align*}
which understandably coincide with the results in \cite[Example 4.3]{TsPJM}. 
\end{example}

\ 

\section{Proof of Theorem \ref{T-1}}\label{sec-3}

Similar to Theorem \ref{thm1}, we can give the proof of Theorem \ref{T-1} by induction on $r\geq 2$. In fact, the case of $r=2$ is nothing but Theorem \ref{thm1}. 
Clearly one can again focus on 
\begin{equation}
\widetilde{G}^*_{2p_1,\dots,2p_r}(\tau) = \sum_{\substack{m \in \Z \\ m \neq 0}} \sum_{n_1 \in \Z}\cdots \sum_{n_r \in \Z} \prod_{j=1}^r \frac{1}{(m+n_j \tau)^{2p_j}}.  \label{G-ast}
\end{equation}
We immediately see that
\begin{equation}
\widetilde{G}_{2p_1,\dots,2p_r}(\tau) = \widetilde{G}^*_{2p_1,\dots,2p_r}(\tau) +\prod_{j=1}^{r}\left(\frac{2\zeta(2p_j)}{\tau^{2p_j}}\right). \label{G-G^ast}
\end{equation}
Similar to the argument in Section \ref{sec-2}, we can prove the following.

\begin{lemma}\label{G-induction}
\ For $r\in \mathbb{N}_{\geq 2}$ and $p_1,\ldots,p_r\in \N$, 
\begin{align}
& \widetilde{G}^*_{2p_1,\ldots,2p_r}(\tau) \notag\\
& =\widetilde{G}^*_{2p_1,\ldots,2p_{r-2},2(p_{r-1}+p_r)}(\tau) + \sum_{l_1,l_2\in \mathbb{N} \atop l_1 + l_2 = p_{r-1}+p_r}\frac{2 \zeta(2l_2) \binom{2l_2-1}{2p_r-1} }{\tau^{2l_2}}  \widetilde{G}^*_{2p_1,\ldots,2p_{r-2},2l_1}(\tau) \notag\\
& \quad + \sum_{l_1,l_2\in \mathbb{N} \atop l_1 + l_2 = p_{r-1}+p_r}\frac{2 \zeta(2l_1) \binom{2l_1-1}{2p_{r-1}-1} }{\tau^{2l_1}}  \widetilde{G}^*_{p_1,\ldots,p_{r-2},2l_2}(\tau). \label{G^ast-expr}
\end{align}
\end{lemma}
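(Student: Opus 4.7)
The proof should follow the inductive blueprint suggested by the author, with the base case $r=2$ being Theorem~\ref{thm1} itself. For the inductive step, my plan is to isolate the last two factors of the integrand in \eqref{G-ast} and to apply the same partial-fraction identity \eqref{partfrac_len2} that was the engine of Section~\ref{sec-2}, treating the outer summations over $m\neq 0$ and $n_1,\ldots,n_{r-2}$ simply as fixed scaffolding that carries the identity through.

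Concretely, I would split the double sum $\sum_{n_{r-1}\in\Z}\sum_{n_r\in\Z}$ into its diagonal part $n_{r-1}=n_r$ and its off-diagonal part $n_{r-1}\neq n_r$. On the diagonal, the two last factors merge into $(m+n_{r-1}\tau)^{-2(p_{r-1}+p_r)}$, and what remains is precisely $\widetilde{G}^*_{2p_1,\ldots,2p_{r-2},\,2(p_{r-1}+p_r)}(\tau)$, giving the first term on the right-hand side of \eqref{G^ast-expr}. On the off-diagonal, I would apply \eqref{partfrac_len2} with $x=m$, $c_1=n_{r-1}\tau$, $c_2=n_r\tau$, $s_1=2p_{r-1}$, $s_2=2p_r$, producing two symmetric families of terms indexed by $(k_1,k_2)\in\N^2$ with $k_1+k_2=2(p_{r-1}+p_r)$. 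In the first family the variable $n_r$ appears only through $(n_{r-1}-n_r)^{k_2}$, so summing it out via the identity already used in \eqref{sumtoprod} contributes a factor $2\zeta^\dagger(k_2)/\tau^{k_2}$, which vanishes unless $k_2$ is even; since the total $k_1+k_2$ is even, this forces $k_1$ to be even as well. Writing $k_i=2l_i$ and recognizing the remaining single-index sum as a length-$(r-1)$ star series, I get exactly the first inner sum of \eqref{G^ast-expr}; the second family, in which the roles of $n_{r-1}$ and $n_r$ are swapped, produces the second inner sum symmetrically.

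The main obstacle, as was already flagged in Section~\ref{sec-2}, is the legitimacy of the rearrangements of summation. When $(p_1,\ldots,p_r)\neq(1,\ldots,1)$ the full series in \eqref{G-ast} converges absolutely, so every interchange is routine. In the borderline case $p_1=\cdots=p_r=1$ the series is only conditionally convergent in the prescribed outer-to-inner order $\sum_m\sum_{n_1}\cdots\sum_{n_r}$. I would handle this by performing the partial-fraction manipulation inside each fixed $m$-slice: after separating the diagonal $n_{r-1}=n_r$, the off-diagonal off-slice summand decays in $|n_{r-1}-n_r|^{-k_2}$, and the only surviving even pair $(k_1,k_2)$ with $k_2\ge 2$ makes the inner rearrangement absolutely convergent, while the odd pairs are annihilated by the symmetric cancellation already used in \eqref{sumtoprod}. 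The resulting length-$(r-1)$ series $\widetilde{G}^*_{\,2p_1,\ldots,2p_{r-2},2l_j}(\tau)$ are then read in exactly the same outer order, which matches their definitions and the convention built into \eqref{G-ast}.

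With Lemma~\ref{G-induction} established, Theorem~\ref{T-1} follows by combining it with the relation \eqref{G-G^ast} and iterating the induction: every application of the lemma reduces the depth by one and multiplies by rational functions of $\tau$ together with values $\zeta(2l)\in\Q[\pi^2]$ and ordinary Eisenstein series $G_{2k}(\tau)$, and the latter lie in $\Q[G_4(\tau),G_6(\tau)]$ together with $G_2(\tau)$ for $k=1$, yielding \eqref{e-1-5}.
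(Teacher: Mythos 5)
Your proposal is correct and follows essentially the same route as the paper: split the last two inner sums into the diagonal $n_{r-1}=n_r$ (yielding the merged depth-$(r-1)$ term) and the off-diagonal part, apply the partial fraction identity \eqref{partfrac_len2} with $x=m$, $c_j=n_j\tau$, and sum out one index via \eqref{sumtoprod} so that only even $(k_1,k_2)=(2l_1,2l_2)$ survive. Your added care about the conditionally convergent case $(p_1,\ldots,p_r)=(1,\ldots,1)$ goes slightly beyond what the paper records, but the core argument is identical.
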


\begin{proof}
Similar to the proof of Theorem \ref{thm1}, for $r\geq 2$, we have
\begin{align*}
&\widetilde{G}^*_{2p_1,\ldots,2p_r}(\tau) =  \left\{\sum_{\substack{m \in \Z \\ m \neq 0}} \sum_{\substack{n_1,\ldots,n_r \in \Z \\ n_{r-1}=n_r}} +  \sum_{\substack{m \in \Z \\ m \neq 0}} \sum_{\substack{n_1, \ldots,n_{r} \in \Z\\ n_{r-1}\neq n_r}}\right\} \prod_{j=1}^{r}\frac{1}{(m+n_j \tau)^{2p_j}} \\
&\ = \widetilde{G}^*_{2p_1,\ldots,2p_{r-2},2(p_{r-1}+p_r)}(\tau) \\
&  \quad + \sum_{\substack{m \in \Z \\ m \neq 0}} \sum_{\substack{n_1,\ldots, n_r \in \Z\\ n_{r-1} \neq n_r}} \sum_{k_1,k_2\in \mathbb{N} \atop k_1 + k_2 = 2(p_{r-1}+p_r)} \left( \frac{1}{\tau^{k_2}} \frac{\binom{k_2-1}{2p_r-1} }{ (n_{r-1}-n_r)^{k_2} (m+n_{r-1} \tau)^{k_1} } \right) \\
& \quad + \sum_{\substack{m \in \Z \\ m \neq 0}} \sum_{\substack{n_1,\ldots, n_r \in \Z\\ n_{r-1} \neq n_r}} \sum_{k_1,k_2\in \mathbb{N} \atop k_1 + k_2 = 2(p_{r-1}+p_r)} \left( \frac{1}{\tau^{k_1}} \frac{ \binom{k_1-1}{2p_{r-1}-1} }{ (n_{r}-n_{r-1})^{k_1} (m+n_r \tau)^{k_2} } \right).
\end{align*}
By using \eqref{sumtoprod}, we obtain \eqref{G^ast-expr}.  
\end{proof}

\begin{proof}[of Theorem \ref{T-1}] 
It follows from Lemma \ref{G-induction} and \eqref{G-ast}, we obtain the assertion of Theorem \ref{T-1} induction on $r\geq 2$. 
\end{proof}

It would also be possible, but tedious, to give explicit formulas for $\widetilde{G}_{2p_1,\dots,2p_r}(\tau)$ in terms of only $G_{2p}(\tau)$ and $\zeta(s)$, similar to the case $r=2$. As inductive computations, the above argument gives an algorithm to express $\widetilde{G}_{2p_1,\dots,2p_r}(\tau)$ in terms of $G_{2p}(\tau)$ and $\zeta(s)$.

\begin{example}\label{Ex-3-1}\ \ Using the algorithm introduced in the proof of Theorem \ref{T-1}, we obtain the following formulas:
\begin{align*}
& \gg_{2,2,2}(\tau)=G_6(\tau)+\frac{\pi^2}{\tau^2}G_4(\tau)+\frac{8\pi^4}{15\tau^4}G_2(\tau)-\frac{52\pi^6}{315\tau^6},\\
& \gg_{2,4,2}(\tau)\left(= \gg_{2,2,4}(\tau)\right)\\
& \quad =G_8(\tau)+ \frac{2\pi^2}{3\tau^2}G_6(\tau)+ \frac{4\pi^4}{15\tau^4}G_4(\tau)+ \frac{32\pi^6}{315\tau^6}G_2(\tau)-\frac{184\pi^8}{4725\tau^8}, \\
& \gg_{2,6,2}(\tau)\left(= \gg_{2,2,6}(\tau)\right)\\
& \quad =G_{10}(\tau)+ \frac{2\pi^2}{3\tau^2}G_8(\tau)+ \frac{11\pi^4}{45\tau^4}G_6(\tau)+ \frac{64\pi^6}{945\tau^6}G_4(\tau)+ \frac{32\pi^8}{1575\tau^8}G_2(\tau)-\frac{272\pi^{10}}{31185\tau^{10}},
\end{align*}
where we note \eqref{Eisen-rel}. Letting $\tau=i$, we obtain 
\begin{align*}
& \gg_{2,2,2}(i)=-\frac{\varpi^4\pi^2}{15}+\frac{52\pi^6}{315}-\frac{8\pi^5}{15},\\
& \gg_{2,4,2}(i)=\gg_{2,2,4}(i)=\frac{\varpi^8}{525}+ \frac{4\varpi^4\pi^4}{225} -\frac{184\pi^8}{4725} + \frac{32\pi^7}{315},\\
& \gg_{2,6,2}(i)=\gg_{2,2,6}(i)=  - \frac{2\varpi^8\pi^2}{1575}- \frac{64\varpi^4\pi^6}{14175}+\frac{272\pi^{10}}{31185} - \frac{32\pi^9}{1575}. 
\end{align*}
Also, letting $\tau=\rho$, we obtain
$$\gg_{2,2,2}(\rho)=\frac{\widetilde{\varpi}^6}{35}-\frac{52\pi^6}{315}+\frac{16\pi^5}{15\sqrt{3}}.$$
Furthermore, using the above result, we can give
\begin{align*}
\gg_{2,2,2,2}(\tau)&=\sum_{m\in \mathbb{Z}} \sum_{n_1,n_2,n_3,n_4 \in \mathbb{Z}\atop \{(m,n_j)\not=(0,0)\}_{j=1,2,3,4}}\frac{1}{(m+n_1\tau)^2(m+n_2\tau)^2(m+n_3\tau)^2(m+n_4\tau)^2}\\
& =G_8(\tau)+ \frac{4\pi^2}{3\tau^2}G_6(\tau)+ \frac{14\pi^4}{15\tau^4}G_4(\tau) + \frac{16\pi^6}{35\tau^6}G_2(\tau)-\frac{86\pi^8}{525\tau^8}.
\end{align*}
Therefore we obtain 
\begin{align}
\gg_{2,2,2,2}(i)&=\sum_{m\in \mathbb{Z}} \sum_{n_1,n_2,n_3,n_4 \in \mathbb{Z}\atop \{(m,n_j)\not=(0,0)\}_{j=1,2,3,4}}\frac{1}{(m+n_1i)^2(m+n_2i)^2(m+n_3i)^2(m+n_4i)^2}\label{G2222}\\
& =\frac{\varpi^8}{525} + \frac{14\varpi^4\pi^4}{225}-\frac{86\pi^8}{525} + \frac{16\pi^7}{35}. \notag
\end{align}

\end{example}

\ 

\section{Multiple series involving hyperbolic functions}\label{sec-4}

We briefly state a relation between the result in the preceding section and the result in \cite{TsPJM}. We here consider the multiple series
\begin{align*}
& \ccc_r^{\langle 2k \rangle}(2p_1,\ldots,2p_{r};\tau)=\sum_{m\in \mathbb{Z}\atop m\ne 0} \sum_{n_1 \in \mathbb{Z}}\cdots \sum_{n_r\in \mathbb{Z}}{\coth^{2k} ((m+n_r\tau)\pi i/\tau)}\prod_{j=1}^{r}\frac{1}{(m+n_j\tau)^{2p_j}} \notag
\end{align*}
for $k\in \mathbb{Z}_{\geq 0}$ and $p_1,\ldots,p_{r}\in \mathbb{N}$. The case $r=1$ was already considered and calculated in \cite[Section 2]{TsPJM} as noted in Section \ref{sec-1}. Using the same method as in \cite{TsPJM}, we can prove that 
\begin{equation*}
\begin{split}
& \pi^{2k}\tau^{2(p_1+\cdots+p_{r})}\ccc_r^{\langle 2k \rangle}(2p_1,\ldots,2p_{r};\tau) \in \mathbb{Q}\left[\tau^2,\, \pi^2,\,\left\{\gg_{2p_1,\ldots,2p_{r-1},2\nu}(\tau)\right\}_{1\leq \nu\leq 2p_r+k}\right]
\end{split}
\end{equation*}
and the recursion relation 
\begin{align*}
\sum_{\mu=0}^{k}\binom{k}{\mu}(-1)^{k-\mu}& \ccc_{r}^{\langle 2\mu \rangle}(2p_1,\ldots,2p_r;\tau)=\frac{2^{2k}}{(2k-1)!}\sum_{h=1}^{k}\alpha(k,{2h-1})\frac{(2h-1)!}{(2\pi i/\tau)^{2h}}\notag\\
& \  \times \left\{ \gg_{2p_1,\ldots,2p_r,2h}(\tau)-2^{r+1}\tau^{-2(p_1+\cdots+p_r+h)}\prod_{j=1}^{r}\zeta(2p_j)\cdot \zeta(2h)\right\},
\end{align*}
where $\alpha(n,{k})$ is defined by
\begin{equation*}
X \prod_{l=1}^{n-1}(X-l)(X+l)=\sum_{k=0}^{2n-1}\alpha(n,{k})X^{k}.
\end{equation*}
These two facts can be surely regarded as multiple analogues of Theorems 2.3 and 4.1 in \cite{TsPJM}. The proofs of them can be given in the same way as in \cite[Sections 3 and 4]{TsPJM} by induction on $r$, while they are complicated. Hence we omit them here, and only remark that, combining these results, we obtain the proof of Theorem \ref{T-1} by using 
$\coth^2 x=1+1/\sinh^2 x$ and 
\begin{align*}
 \frac{1}{(\sinh(m\pi i/\tau))^{2\nu}}&=\frac{2^{2\nu}}{(2\nu-1)!}\sum_{j=1}^{\nu}\alpha(\nu,{2j-1})\frac{(2j-1)!}{(2\pi i)^{2j}}\sum_{l\in \mathbb{Z}}\frac{1}{(-m/\tau+l)^{2j}} \notag\\
&\qquad =\frac{2^{2\nu}}{(2\nu-1)!}\sum_{j=1}^{\nu}\alpha(\nu,{2j-1})\frac{(2j-1)!}{(2\pi i/\tau)^{2j}}\sum_{l\in \mathbb{Z}}\frac{1}{(m+l\tau)^{2j}}
\end{align*}
(see \cite[(4.6)]{TsPJM}). 
This proof of Theorem \ref{T-1} is more complicated than our present proof stated in Section \ref{sec-3}. On the other side, as benefits of this method, we obtain explicit formulas for $\ccc_{r}^{\langle 2k \rangle}(2p_1,\ldots,2p_r;\tau)$, for example, 
\begin{align*}
& \ccc_2^{\langle 2 \rangle}(2,2;\tau) =\sum_{m\in \mathbb{Z}\atop m\ne 0} \sum_{n_1 \in \mathbb{Z}} \sum_{n_2\in \mathbb{Z}}\frac{\coth^2 ((m+n_2\tau)\pi i/\tau)}{(m+n_1\tau)^{2}(m+n_2\tau)^{2}}\\
& \quad =\frac{-40\pi^6 + 126\tau^2\pi^4G_2(\tau) - 945\tau^6G_6(\tau)}{945\tau^4\pi^2},\\
&\ccc_2^{\langle 2 \rangle}(2,4;\tau) =\sum_{m\in \mathbb{Z}\atop m\ne 0} \sum_{n_1 \in \mathbb{Z}} \sum_{n_2\in \mathbb{Z}}\frac{\coth^2 ((m+n_2\tau)\pi i/\tau)}{(m+n_1\tau)^{2}(m+n_2\tau)^{4}}\\
&\quad =\frac{32\pi^8 - 180\tau^2\pi^6G_2(\tau) + 945\tau^4\pi^4G_4(\tau) + 4725\tau^6\pi^2G_6(\tau) - 14175\tau^8G_8(\tau)}{14175\tau^6\pi^2},
\end{align*}
which are multiple analogues of the formulas in \cite[Example 2.5]{TsPJM}. 

\

\end{document}